\newcommand{\R}{{\mathbb R}}
\newcommand{\K}{\mathbb{K}}
\newcommand{\N}{{\mathbb N}}
\renewcommand{\k}{{\bf k}}
\newcommand{\ke}{{\bf e}_1}
\newcommand{\ked}{{\bf e}_2}
\newcommand{\spk}{\operatorname{Sup}}
\newcommand{\diam}{\operatorname{diam}}
\newcommand{\calI}{\mathcal{I}}
\newcommand{\calP}{\mathcal{P}}
\newcommand{\calV}{\mathcal{V}}
\newcommand{\frakD}{\mathfrak{D}}
\newcommand{\frakI}{\mathfrak{I}}
\newcommand{\frakP}{\mathfrak{P}}
\theoremstyle{plain}
\newtheorem{theorem}{Theorem}[section]
\newtheorem{corollary}[theorem]{Corollary}
\newtheorem{lemma}[theorem]{Lemma}
\theoremstyle{definition}
\newtheorem{definition}[theorem]{Definition}
\newtheorem{example}[theorem]{Example}
\theoremstyle{remark}
\newtheorem{remark}[theorem]{Remark}
\definecolor{org1}{RGB}{67, 132, 252}
\begin{document}
\title{On the Notion of a Function of Bounded Variation and of Riemann-Stieltjes Integral with Strong Partitions on Hyperbolic Intervals}
\author[$\dagger$]{G. Y. Tellez-Sanchez}
\author[$\star$]{J. Bory-Reyes}
\date{\today}
  
\affil[$\dagger$]{
\footnotesize
Escuela Superior de Fisica y Matem\'aticas

Instituto Polit\'ecnico Nacional

Edif. 9, 1er piso, U.P. Adolfo L\'opez Mateos

07338, Mexico City,  MEXICO

gtellez.wolf@gmail.com

ORCID: 0000-0002-6387-5336
\vspace{3mm}
}
 
\affil[$\star$]{
\footnotesize
Escuela Superior de Ingenieria Mec\'anica y El\'ectrica

Instituto Polit\'ecnico Nacional

Edif. 5, 3er piso, U.P. Adolfo L\'opez Mateos

07338, Mexico City, MEXICO

juanboryreyes@yahoo.com

ORCID: 0000-0002-7004-1794
}

\maketitle

\begin{abstract}
In this paper we provided a classification for partitions of intervals on the hyperbolic plane. Given a partition, to be named strong, we define a notion of a hyperbolic-valued functions of bounded variation and a kind of Riemann-Stieltjes integral. A condition relating to both concepts appears to be natural for the existence of the integral, as it occurs in real analysis.
\end{abstract}
\noindent
\textbf{Keywords and Phrases.} Hyperbolic numbers, partitions, function of bounded variation, Riemann-Stieltjes integral.

\noindent
\textbf{Mathematics Subject Classification (2020):} 30G35, 28B15, 26B15.

\section{Introduction}
Hyperbolic numbers, also called split-complex numbers, dates back to 1848 when James Cockle revealed his tessarines in \cite{c1848}. In the beginning, between 1935-1941 a theory of hyperbolic-valued functions in the hypercomplex analysis setting was deeply investigated and extensively continued in many respect, as an appropriate generalization to the real analysis, see for instance \cite{als2017, g1997, g2001, CST2018, ks2017, ks2016, kgs2020, lssv2015, lps2014, s1995, ssk2020, t2018, tb2017, tb2021, tb2019, vd1935, v1938}.

Using the idempotent representation of hyperbolic number, introduced early in \cite{vd1935}, and also know as the light cone basis, we can identify the hyperbolic numbers plane with the direct product of real numbers with itself. The significance of this fact is revealed from the possibility to extends the total order in real numbers to hyperbolic numbers in a partial order sense, doing use of the point-wise well order in the Euclidean plane \cite{bbls2016}. One of the appealing aspects of the idempotent representation is that it readily lends itself to make our calculations easier.

The remarkable feature of partitions over hyperbolic intervals have allowed to bring a certain classification of Cantor type sets \cite{bbls2016, tb2017, t2018}, to introduce hyperbolic-valued probabilities \cite{als2017} and subsequently its application to the chaos game algorithm for hyperbolic numbers appeared in \cite{tb2021}. In \cite{bps2019} was introduced a Cauchy type integral in the bicomplex setting, and afterward a notion of integration of hyperbolic-valued functions was presented in \cite{el2022}. A concept of hyperbolic-valued measure can be found in \cite{ks2017, CST2018}. Proceeding further in this direction, a first presentation of a Riemann-Stieltjes integral for hyperbolic-valued functions appeared in \cite{tb2021b}, meanwhile \cite{gm2022} contains an extension to the bicomplex valued functions framework. State that Cauchy-like integral formula holds for functions of a hyperbolic variable has been recently shown, see \cite{b2013, cz2012, l2007}. 

The paper is organized as follows. After this brief introduction, in Section \ref{hvFT} we recall basic concepts and facts on hyperbolic-valued function theory. Section \ref{scHP} introduces a classification for hyperbolic-valued partitions where strong partitions are outstanding because it lets to define a hyperbolic-valued function of bounded variation and a Riemann-Stieltjes integral. In Section \ref{scHFBV}, hyperbolic-valued functions of bounded variations are defined and one of the main results is presented. It says that all discontinuities of functions of bounded variations on the hyperbolic plane can be located in parallel lines to idempotent axes. As a consequence, the set of all points of discontinuity of a function of bounded variation has Lebesgue measure equal to zero, like it occurs in real analysis \cite{a1974}. 

In final section we develop the theory of Riemann-Stieltjes integral over hyperbolic-valued functions, where the notion of strong partitions have been taken into consideration. Necessary and sufficient conditions of the ensuring the existence of integral are given, which also cover the relation of hyperbolic-valued functions of bounded variation with the Riemann-Stieltjes integral.

\section{Hyperbolic-valued function theory}\label{hvFT}
\subsection{Hyperbolic Numbers}
The ring of hyperbolic numbers is generated by real numbers and one imaginary unit $\k \not\in \R$ with the property that $\k^2 = 1$, which we can write as
\[\K := \R[\k] = \{ t + s\k \ |\ t, s \in \R \}.\]
In a similar way like complex numbers, $\K$ can be identified with the Euclidean plane and it is customary to call hyperbolic numbers plane.

Complex and hyperbolic numbers are different number systems because the first has the structure of field meanwhile the second is a commutative unit ring with zero divisors.

There are two important zero divisors in $\K$:
\[\ke := \frac{1+\k}{2}, \quad \ked := \frac{1 - \k}{2}.\]
They are idempotent elements and the product of both numbers is equal to zero.

Although $\ke$ and $\ked$ are zero divisors, every element $\alpha \in \K$ is represented in an unique real linear combination of these idempotent elements.
\[\alpha = t + s\k \Rightarrow \alpha = (t + s)\ke + (t - s)\ked. \]
\[\alpha = a_1\ke + a_2\ked \Rightarrow \alpha = \frac{1}{2}(a_1 + a_2) + \frac{1}{2}(a_1 - a_2)\k. \]
When $\alpha$ is taken as $a_1\ke + a_2\ked$, it will be said that $\alpha$ is in idempotent representation.

Using idempotent representation, the hyperbolic numbers plane is identify with $\R\ke + \R\ked$ and there exists a ring isomorphism between $\K$ and the direct product $\R \otimes \R$, see for instance \cite{vd1935, s1995}. The real line is endowed in $\K$ by the mapping $x \mapsto x\ke + x\ked$. Also, the idempotent projections of a subset $A \subset \K$ are given by
\[A_{\ke} := \{ a \in \R \ |\ \exists b\in \R,\ a\ke + b\ked \in A \}, \]
\[A_{\ked} := \{ b \in \R \ |\ \exists a \in \R,\ a\ke + b\ked \in A \}. \]
\subsection{Partial order}
On the realization of hypercomplex numbers, a partial order relation is defined in \cite{lssv2015}. For two hyperbolic numbers $\alpha= a_1\ke + a_2\ked$ and $\beta= b_1\ke + b_2\ked$ are related by $\preceq$, if
\[a_1 \leq b_1 \ \land \ a_2 \leq b_2. \]
One can check that this relation is reflexive, transitive and antisymmetric and represents a good generalization for the total order in the real numbers.

Strict order for hyperbolic numbers is defined by the rule
\[ \alpha \prec \beta \quad \Leftrightarrow \quad a_1 < b_1 \ \land \ a_2 < b_2.\]
Here, strict order is handled in a different way like that defined at \cite{kgs2020}, where it is considered only one of the two cases: $a_1 = b_1$ or $a_2 = b_2$. 

Closed and open intervals are defined relative to the partial order as the sets
\begin{displaymath}
\mbox{Closed: }[\alpha, \beta]_{\k} := \{ \xi \in \K \ |\ \alpha \preceq \xi \preceq \beta \},
\end{displaymath}
\begin{displaymath}
\mbox{Open: } (\alpha, \beta)_{\k} := \{ \xi \in \K \ |\ \alpha \prec \xi \prec \beta \}.
\end{displaymath}

The length of a hyperbolic interval $\frakI = [\alpha, \beta]_{\k}$, is by definition, the number:
\begin{equation}\label{length}
\lambda_{\k}(\frakI) := \beta - \alpha.
\end{equation}
A closed interval $[\alpha, \beta]_{\k}$ is said to be degenerated when $a_1 = b_1$ or $a_2 = b_2$. This type of intervals are identified with a line segment bounded by $\alpha$ and $\beta$.
\subsection{Holomorphic hyperbolic-valued functions}\label{hypValuedFunctions}
The conjugate of a hyperbolic number $\alpha= t + s\k$ is another hyperbolic number denoted by $\alpha^{\dagger} := t - s\k$. But when $\alpha$ is in idempotent representation $\alpha = a_1\ke + a_2\ked$, its conjugate is getting by $\alpha^{\dagger} := a_2\ke + a_1\ked$.

We will consider the hyperbolic-valued module of a hyperbolic number $\alpha$ defined by
\[ |\alpha|_{k}:= |a_1|\ke + |a_2|\ked. \]
Since $| \alpha |_{\k} = \sqrt{\alpha^2}$, it is reasonable that the behaviour of hyperbolic module should in some sense approximate the behaviour of real module. Moreover, the hyperbolic-valued module generates a hyperbolic-valued metric (see \cite{kgs2020, ks2016, lssv2015}) given by 
$$D_{\k} = |\alpha - \beta |_{\k}$$ 
with the properties
\begin{enumerate}
\item $D_{\k}(\alpha, \beta) = 0$, if and only if $\alpha = \beta$.
\item $D_{\k}(\alpha, \beta) = D_{\k}(\beta, \alpha)$.
\item $D_{\k}(\alpha, \gamma) \preceq D_{\k}(\alpha, \beta) + D_{\k}(\beta, \gamma)$.
\end{enumerate}
We call $D_{\k}$ the usual hyperbolic metric over $\K$.

With the structure of hyperbolic metric space, the concept of continuous function was introduced in \cite{kgs2020}.
\begin{definition}
Let $F$ be a hyperbolic-valued function over $\Omega \subset \K$. We say that $F$ is a $\K$-continuous function in $\xi \in \Omega$, when given a hyperbolic positive number $\epsilon$ there exists another hyperbolic positive number $\delta$ such that $D_{\k}(\xi, \zeta) \prec \delta$, with the condition 
$$D_{\k}(F(\xi), F(\zeta)) \prec \epsilon.$$
\end{definition}
We will need the requirement on hyperbolic-valued functions $F= F_1 + F_2\k: \Omega \rightarrow \K$ that it have real-valued components $F_1, F_2$ of one variable, which is guaranteed on the idempotent representation if, for instance, $F$ satisfies the Cauchy-Riemann type system 
\begin{equation}\label{crsystem}
\frac{\partial F_1(\xi)}{\partial x_1}= \frac{\partial F_1(\xi)}{\partial x_2}, \quad \land \quad \frac{\partial F_1(\xi)}{\partial x_2} = \frac{\partial F_2(\xi)}{\partial x_1},
\end{equation}
for $\xi= x_{1}\ke + x_{2}\ked \in \Omega$. 

When $F = F(\xi) = F_1(x_{1})\ke + F_2(x_{2})\ked$ fulfill the system \ref{crsystem}, we obtain what is known as $\K$-derivable or holomorphic hyperbolic-valued function, with derivative
\[ F'(\xi) := dF_1(x_{1})\ke + dF_2(x_{2})\ked, \]
where $dF_1, dF_2$ are the real derivatives of $F_1$ and $F_2$ respectively. For a deeper discussion of this result and properties of holomorphic hyperbolic-valued functions we refer the reader to \cite{s1995, vd1935, v1938}.

A holomorphic function defined over an open connected set $\Omega$, is extended to $\Omega_{\ke}\ke + \Omega_{\ked}\ked$, to be the open interval $(\omega_1, \omega_2)_{\k}$ with
\[\omega_{1} = \inf \Omega_{\ke}\ke + \inf \Omega_{\ked}\ked, \quad \omega_2 = \sup \Omega_{\ke} \ke + \sup \Omega_{\ked} \ked. \]
This result appears in \cite{vd1935}. 

\section{Classification of Hyperbolic Partitions}\label{scHP}
A non-degenerated hyperbolic interval can be consider like a rectangle in the Euclidean plane, therefore it is possible to divide the interval $[\alpha, \beta]_{\k}$ by a classical partition $P = P_1 \times P_2$, where $P_1 \subset [a_1, b_1]$ and $P_2 \subset [a_2, b_2]$ are partitions from its respective intervals. A shortcoming of this division is the inability to obtain the length of $[\alpha, \beta]_{\k}$ equals the sum of the length of the union of sub-rectangles of $P$, see \cite{bbls2016}.
\begin{example}\label{ex1}
Let the interval $\frakI = [0, 1]_{\k}$ divided by the following nine sub-intervals: 
\[\small
\arraycolsep=1.4pt\def\arraystretch{2.2}
\begin{array}{ccc}
\frakI_1 = \left[0, \frac{1}{3}\right]_{\k}, & \frakI_2 = \left[\frac{1}{3}, \frac{2}{3} \right]_{\k}, & \frakI_3 = \left[\frac{2}{3}, 1 \right]_{\k}, \\
\frakI_4 = \left[\frac{1}{3}\ke, \frac{2}{3}\ke + \frac{1}{3}\ked \right]_{\k}, & \frakI_5 = \left[\frac{2}{3}\ke, 1\ke + \frac{1}{3}\ked \right]_{\k}, & \frakI_6 = \left[\frac{2}{3}\ke + \frac{1}{3}\ked, 1\ke + \frac{2}{3}\ked \right]_{\k}, \\
\frakI_7 = \left[\frac{1}{3}\ked, \frac{1}{3}\ke + \frac{2}{3}\ked \right]_{\k}, & \frakI_8 = \left[\frac{2}{3}\ked, \frac{1}{3}\ke + 1\ked \right]_{\k}, & \frakI_9 = \left[\frac{1}{3}\ke + \frac{2}{3}\ked, \frac{2}{3}\ke + 1\ked \right]_{\k}.
\end{array}
\]
By the definition (\ref{length}), the sum of the lengths of  all sub-intervals $\frakI_j$, with $j \in \{1, ..., 9\}$, is not equal to the length of $[0, 1]_{\k}$.
\[\lambda_{k}\left( \frakI \right) = 1 \neq 3 = \sum_{j = 1}^{9} \lambda_{\k} \left( \frakI_j \right). \]
\end{example}
We now indicate how that difficulty can be circumvented.
\begin{definition}\label{df:RegPart}
Any finite collection $S_1, ..., S_n$ of sub-intervals into which $[\alpha, \beta]_{\k}$ is divided: 
\[ [\alpha, \beta]_{\k} = \bigcup_{j = 1}^{n}S_{j},\]
will be called a regular partition if 
\[\mu_{\R^{2}} \left( [\alpha, \beta]_{\k} \right) = \sum_{j = 1}^{n} \mu_{\R^{2}} \left( S_j \right),\]
where $\mu_{\R^{2}}$ denotes the Lebesgue measure in the Euclidean plane.
\end{definition}

\begin{remark}
Definition \ref{df:RegPart} is not restricted to the case of sub-rectangles generated by the Cartesian product of partitions from real intervals like in Example \ref{ex1}.
\end{remark}
Let us introduce the following natural definition of hyperbolic interval partition.
\begin{definition}\label{waekpartition}
Any collection $\calI$ of sub-intervals into which $[\alpha, \beta]_{\k}$ is divided:
\[[\alpha, \beta]_{\k} = \bigcup_{I \in \calI}I\]
is said to be a weak partition if 
\[\lambda_{\k}([\alpha, \beta]_{\k}) = \sum_{I \in \calI} \lambda_{\k} (I).\]
\end{definition} 
Weak partitions allows that the set of sub-intervals could have empty intersections like Fig. \ref{fg:wp} shows.
\begin{figure}[ht]
\begin{subfigure}{.5\textwidth}
\centering
\includegraphics[scale=1]{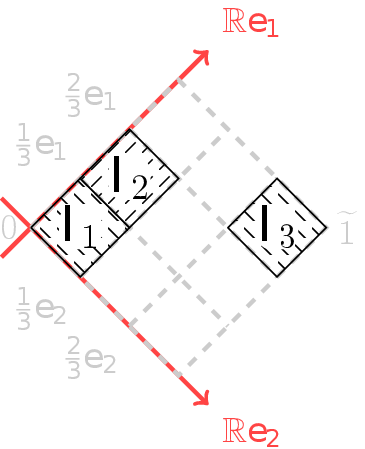}
\caption{ }
\label{fg:wp1}
\end{subfigure}
\begin{subfigure}{.5\textwidth}
\centering
\includegraphics[scale=1]{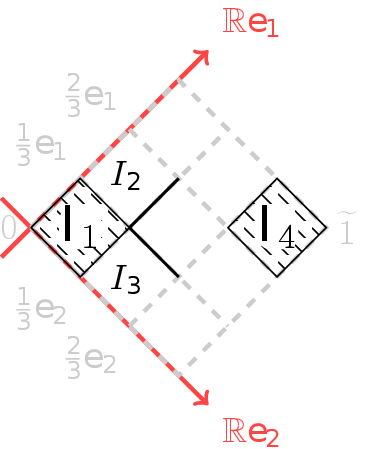}
\caption{ }
\label{fg:wp2}
\end{subfigure}
\caption{Example of weak partition}
\label{fg:wp}
\end{figure}

Figure \ref{fg:wp1} shows three sub-intervals from $[0, 1]_{\k}$ all with length equal to $\displaystyle\frac{1}{3}$. Therefore
\[\lambda_{\k}(I_1) + \lambda_{\k}(I_2) + \lambda_{\k}(I_3) = 1.\]
In the other hand, Fig. \ref{fg:wp2} shows four intervals, where $I_1$ and $I_4$ have length equal to $\frac{1}{3}$, meanwhile for the two remaining intervals we have
\[\lambda_{\k}(I_2) = \frac{1}{3}\ke \text{ and } \lambda_{\k}(I_3) = \displaystyle\frac{1}{3}\ked.\]
So, the sum of lengths of the four intervals is again equal to $1$.
\begin{remark}
In \cite{bbls2016}, to avoid the disjoint intervals issue, a total order over the points that generate a partition is required.
\end{remark}

\begin{definition} \label{strongpartition}
Let $\frakP = \{\rho_0, ..., \rho_n\}$ be a finite collection of points in the interval $[\alpha, \beta]_{\k}$ such that $\rho_s \neq \rho_t$ when $s \neq t$. We say that $\frakP$ is a strong partition, if both conditions are fulfill
\begin{enumerate}[1)- \quad]
\item $\frakP$ is a chain on $\K$.
\item\label{strongpartition2} $\rho_0 = \alpha$, $\rho_n = \beta$ and
\[ \rho_0 \preceq \rho_1 \preceq ... \preceq \rho_n .\]
\end{enumerate}
\end{definition}

Definition \ref{strongpartition} is less restrictive than that given in \cite{bbls2016}, however the term ``strong" makes reference to the comparative with Def. \ref{waekpartition}.

The equality in (\ref{strongpartition}-\ref{strongpartition2}) is not allowed by the authors in \cite{bbls2016}, therefore their definition is restricted to no zero divisor elements. However, this restriction in Def. \ref{strongpartition} does not alter the proof given in \cite{bbls2016} for the next result.

\begin{theorem}
If $\frakP$ is a strong partition of $[\alpha, \beta]_{\k}$, then
\[ \sum_{j = 1}^{n} \lambda_{\k}([\rho_{j-1}, \rho_{j}]_{\k}) = \lambda_{\k} ([\alpha, \beta]_{\k}).\]
\end{theorem}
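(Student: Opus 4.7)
The plan is to reduce the claim to a telescoping sum using the very definition of $\lambda_\k$ given in (\ref{length}). Since condition 2 of Definition \ref{strongpartition} guarantees $\rho_{j-1} \preceq \rho_j$ for each $j$, the closed intervals $[\rho_{j-1}, \rho_j]_\k$ are well-defined (in particular, nondegenerate or degenerate, but always a legitimate hyperbolic interval), so $\lambda_\k([\rho_{j-1}, \rho_j]_\k) = \rho_j - \rho_{j-1}$ makes sense in $\K$.

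First, I would rewrite each summand using (\ref{length}):
\[
\sum_{j=1}^{n} \lambda_\k([\rho_{j-1}, \rho_j]_\k) \;=\; \sum_{j=1}^{n} (\rho_j - \rho_{j-1}).
\]
Then I would observe that the sum on the right telescopes in the additive abelian group $(\K, +)$, yielding $\rho_n - \rho_0$. The second bullet of Definition \ref{strongpartition} supplies $\rho_0 = \alpha$ and $\rho_n = \beta$, whence the total equals $\beta - \alpha = \lambda_\k([\alpha,\beta]_\k)$.

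There is essentially no obstacle here: the chain hypothesis (condition 1) of Definition \ref{strongpartition} is what legitimizes viewing the successive differences as hyperbolic lengths rather than arbitrary elements of $\K$, but once that is in place the argument is a one-line cancellation. If one wanted to be explicit, one could split $\rho_j = r_j^{(1)}\ke + r_j^{(2)}\ked$ and verify the telescoping component-wise on the idempotent projections, but this adds no content beyond the single-line calculation above.
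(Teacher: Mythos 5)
Your proof is correct: by the definition of length in (\ref{length}) each summand is $\rho_j-\rho_{j-1}$, the sum telescopes to $\rho_n-\rho_0=\beta-\alpha$, and condition 2 of Definition \ref{strongpartition} guarantees both that each $[\rho_{j-1},\rho_j]_{\k}$ is a genuine interval and that the endpoints are $\alpha$ and $\beta$. The paper itself omits the argument and defers to the cited reference, but the telescoping computation you give is the standard one and there is nothing more to it.
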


Fig \ref{fg:sp1} shows how is the design of an strong partition where zero divisor elements are included. While Fig \ref{fg:sp2} is the extension of the real partition with diameter equal to $\frac{1}{3}$ to hyperbolic plane.

\begin{figure}[ht]
\begin{subfigure}{.5\textwidth}
\centering
\includegraphics[scale=1]{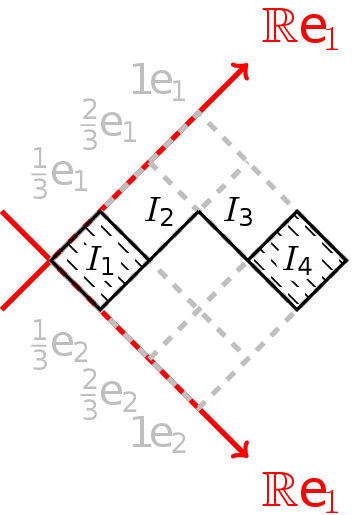}
\caption{ }
\label{fg:sp1}
\end{subfigure}
\begin{subfigure}{.5\textwidth}
\centering
\includegraphics[scale=1]{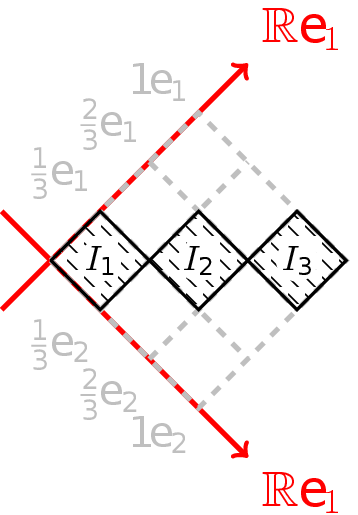}
\caption{ }
\label{fg:sp2}
\end{subfigure}
\caption{Example of strong partition}
\label{fg:sp}
\end{figure}

Projections of one partition $\frakP = \{ \rho_0, \rho_2, ..., \rho_{n} \}$ over the interval $[\alpha, \beta]_{\k}$, makes two partitions of the real intervals $[a_1, b_1]$ and $[a_2, b_2]$.
\begin{equation}\label{eq2}
\begin{split}
\frakP_{\ke} & = \{ p_{0, 1}, p_{2, 1}, ..., p_{n, 1} \} \\
\frakP_{\ked} & = \{p_{0, 2}, p_{2, 2}, ..., p_{n, 2} \},
\end{split}
\end{equation}
where $\rho_{j} = p_{j, 1}\ke + p_{j, 2}\ked$ for every $j \in \{1, 2, ..., n \}$.

Now, if the partitions $P = \{p_0, ..., p_s\} \subset [a_1, b_1]$ and $Q = \{q_0, ..., q_t\} \subset [a_2, b_2]$ are given, then it is possible to build one hyperbolic partition of $[\alpha, \beta]_{\k}$.

By definition of partition, $\alpha = p_{0}\ke + q_0\ked$ and $\beta = p_{s}\ke + q_{t}\ked$. These are renamed as $\alpha = \rho_{0, 0}$ and $\beta = \rho_{s, t}$.

The general process to get a hyperbolic point is taking points $p_{s_j} \in P$ and $q_{t_j} \in Q$ with $p_{s_j - 1} \leq p_{s_j} \leq p_{s}$ and $q_{t_j - 1} \leq q_{t_j} \leq q_{t}$, but if $p_{s_j} = p_{s_j - 1}$, then $q_{t_j} \in Q \setminus \{q_{t_j - 1}\}$, in a similar way if $q_{t_j} = q_{t_j -1}$, then $p_{s_j} \in P \setminus \{p_{s_j - 1}\}$. It defines the point $\rho_{s_j, t_j} = p_{s_j}\ke + q_{t_j}\ked$.

Previous step only can be repeated in a maximum of $s + t$ times and it finishes when $\rho_{s_j, t_j} = \rho_{s, t}$.

This procedure generates a strong partition 
\begin{equation}\label{mkStrongPart}
\frakP = \{\rho_{1, 1}, \rho_{s_1, t_1}, ..., \rho_{s, t} \},
\end{equation}
although $\frakP$ is not unique. Figure \ref{realp} shows some examples.

\begin{figure}[ht]
\begin{subfigure}{.5\textwidth}
\centering
\includegraphics[scale=0.7]{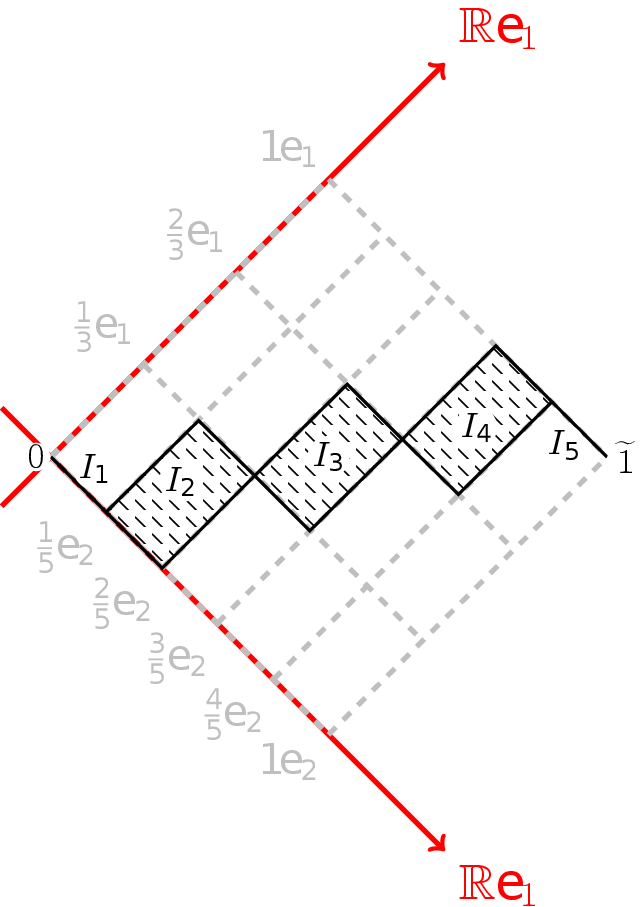}
\caption{ }
\label{realp1}
\end{subfigure}
\begin{subfigure}{.5\textwidth}
\centering
\includegraphics[scale=0.7]{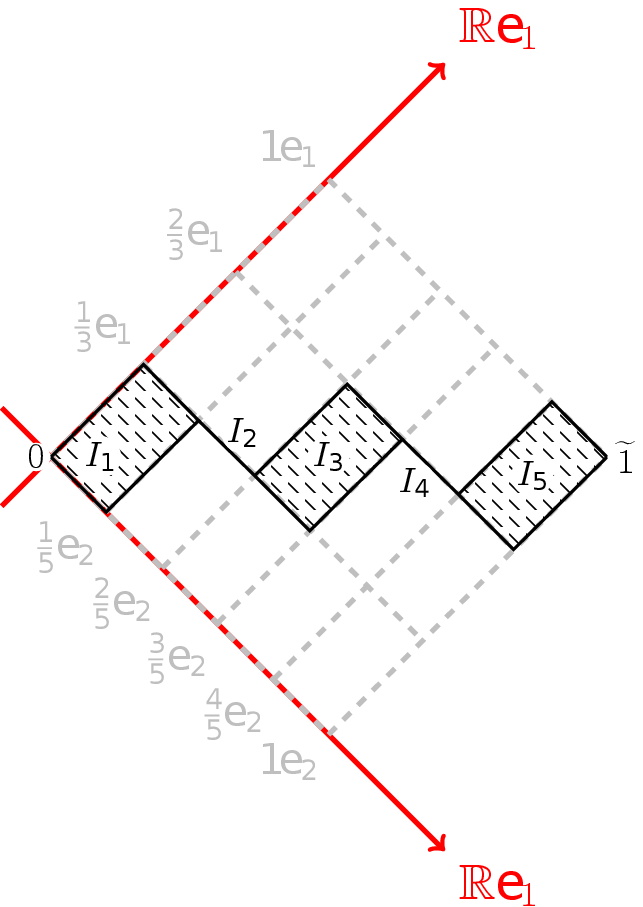}
\caption{ }
\label{realp2}
\end{subfigure}
\caption{Strong partition generated by $\displaystyle P_1 = \left\{0, \frac{1}{3}, \frac{2}{3}, 1\right\}$, $\displaystyle P_2 = \left\{0, \frac{1}{5}, \frac{2}{5}, \frac{3}{5}, \frac{4}{5}, 1\right\}$. Both are partitions on the real interval $[0, 1]$.}
\label{realp}
\end{figure}

\section{Hyperbolic Functions of Bounded Variation}\label{scHFBV}
Let $[\alpha, \beta]_{\k} \subset \K$ and $F: [\alpha,  \beta]_{\k} \rightarrow \K$ a hyperbolic-valued function with idempotent representation $F = F_{1}\ke + F_{2}\ked$. 

If $\frakP = \{\rho_0, \rho_1, ..., \rho_{n}\}$ is a strong partition of $[\alpha, \beta]_{\k}$, then we can consider the quantity
\begin{equation}\label{eq1}
\begin{split}
\sum_{j = 0}^{n-1} & |F(\rho_{j+1}) - F(\rho_{j})|_{\k} = \\
& \sum_{j=0}^{n-1} \left( |F_1(\rho_{j + 1}) - F_1(\rho_{j})|\ke + |F_2(\rho_{j + 1}) - F_2(\rho_{j})|\ked \right) =  \\
& \left( \sum_{j=0}^{n-1} |F_1(\rho_{j + 1}) - F_1(\rho_{j})| \right)\ke + \left( \sum_{j=0}^{n-1} |F_2(\rho_{j + 1}) - F_2(\rho_{j})| \right)\ked  = \\
& \left( \sum_{j=0}^{n-1} \Delta_{\frakP, j}F_1 \right) \ke + \left( \sum_{j=0}^{n-1} \Delta_{\frakP, j}F_2 \right)\ked.
\end{split}
\end{equation}

Let $\calP([\alpha, \beta]_{\k})$ denotes the family of all strong partitions for $[\alpha, \beta]_{\k}$ and it defines the set
{\small
\[ \sum_{\calP([\alpha, \beta]_{\k})} (F) := \left\{ \left( \sum_{j=0}^{n_\frakP-1} \Delta_{\frakP, j}F_1 \right) \ke + \left( \sum_{j=0}^{n_\frakP-1} \Delta_{\frakP, j}F_2 \right)\ked \  \big| \ \frakP \in \calP([\alpha, \beta]_{\k})  \right\}. \]
}
\begin{definition}\label{d4}
A hyperbolic-valued function $F: [\alpha, \beta]_{\k} \rightarrow \K$ is said to have bounded variation, if 
\[\spk\left( \sum_{\calP([\alpha, \beta]_{\k})} (F) \right) \prec \infty.\]
\end{definition}

The supremum is taken like in references \cite{tb2017, t2018, lps2014, ssk2020}. On a nonempty set $A \subset \K$,
\[\spk(A) := \sup(A_{\ke})\ke + \sup(A_{\ked})\ked.\]

So, to say that a function has bounded variation is equivalent to require that
\[\left( \sum_{\calP([\alpha, \beta]_{\k})} (F) \right)_{\ke} \text{ and } \quad  \left( \sum_{\calP([\alpha, \beta]_{\k})} (F)\right)_{\ked} \]
are bounded sets of real numbers.

\begin{definition}\label{d5}
The total variation of a function of bounded variation $F: [\alpha, \beta]_{\k} \rightarrow \K$ is given by
\[\calV_{[\alpha, \beta]_{\k}}(F) := \spk\left( \sum_{\calP([\alpha, \beta]_{\k})} (F) \right).\]
\end{definition}
Therefore,
{\small
\[
\begin{split}
\calV_{[\alpha, \beta]_{\k}}(F) & = \sup_{\frakP \in \calP([\alpha, \beta]_{\k})}  \left( \sum_{j=0}^{n_\frakP-1} \Delta_{\frakP, j}F_1 \right)\ke + \sup_{\frakP \in \calP([\alpha, \beta]_{\k})}  \left( \sum_{j=0}^{n_\frakP-1} \Delta_{\frakP, j}F_2 \right)\ked.
\end{split}
\]}

Components of a function that satisfies Def. \ref{d4} do not necessarily have Vitali variation finite (see \cite{v1908, f1910, ca1933, ad2015} for definition of Vitali variation). A function of finite Vitali variation takes account the four vertex from every sub-interval (or sub-square) in the real plane generated in a regular partition. Meanwhile the hyperbolic version only takes two extremes (or two vertex) from every interval in a strong partitions, which is a proper subset of the regular partition.

Special case to consider are functions when every component depends only on the respective component of the variable. So, let $F:[\alpha, \beta]_{\k} \rightarrow \K$ be a function such that $F\left( \xi \right) = F_1(x_1)\ke + F_2(x_2)\ked$. Eq. \ref{eq1} transforms to:
\begin{equation}\label{eq3}
\begin{split}
\sum_{j = 0}^{n_\frakP-1} &|F(\rho_{j+1}) - F(\rho_{j})|_{\k}  = \\
& \left( \sum_{j=0}^{n_\frakP-1} |F_1(p_{j + 1, 1}) - F_1(p_{j, 1})| \right)\ke + \left( \sum_{j=0}^{n_\frakP-1} |F_2(p_{j + 1, 2}) - F_2(p_{j, 2})| \right)\ked.
\end{split}
\end{equation}

\begin{remark}\label{rk1}
Eq. \ref{eq2} implies that the sums in Eq. \ref{eq1} are taken over the projections $\frakP_{\ke}$, $\frakP_{\ked}$, which are partitions of the real intervals $\left( [\alpha, \beta]_{\k} \right)_{\ke} = [a_1, b_1]$ and $\left( [\alpha, \beta]_{\k} \right)_{\ked} = [a_2, b_2]$ respectively. 
\end{remark}

Let us denote by $\calP([a_{j}, b_{j}])$ the collection of all partitions of the real interval $[a_{j}, b_{j}]$ for every $j \in \{1, 2\}$ and introduce the following sets
\[ \begin{split}
\sum_{\calP([a_{j}, b_{j}])}(F_{j}) = \left\{ \sum_{j=0}^{n_P-1} |F_j(p_{j + 1}) - F_j(p_{j})| \ |\ P \in \calP([a_{j}, b_{j}]) \right\}.
\end{split} \]

\begin{theorem}\label{pt1}
Let $F:[\alpha, \beta]_{\k} \rightarrow \K$ be a function with every component depending on the respective component of the variable. Then 
\[ \sum_{\calP([\alpha, \beta]_{\k})} (F) = \sum_{\calP([a_{1}, b_{1}])}(F_{1})\ke + \sum_{\calP([a_{2}, b_{2}])}(F_{2})\ked. \]
\end{theorem}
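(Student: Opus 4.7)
The plan is to establish set equality by proving two inclusions, exploiting the idempotent decomposition $F = F_1\ke + F_2\ked$ together with the construction that links strong partitions to pairs of real partitions.

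For the inclusion $\subseteq$, I would start with an arbitrary strong partition $\frakP = \{\rho_0, \ldots, \rho_{n_\frakP}\}$ of $[\alpha,\beta]_\k$ and examine its projections $\frakP_{\ke}$ and $\frakP_{\ked}$ as defined in Eq. \ref{eq2}. Since $\frakP$ is a chain under $\preceq$, the real coordinates satisfy $p_{0,1} \le p_{1,1} \le \cdots \le p_{n_\frakP,1}$ and similarly for the second components, so after deleting consecutive repetitions we obtain genuine partitions $P \in \calP([a_1,b_1])$ and $Q \in \calP([a_2,b_2])$. Because $F$ splits as $F_1(x_1)\ke + F_2(x_2)\ked$, Eq. \ref{eq3} applies and every summand $|F_1(p_{j+1,1}) - F_1(p_{j,1})|$ where $p_{j+1,1} = p_{j,1}$ vanishes; removing those trivial terms gives precisely the variation sum of $F_1$ over $P$, and analogously for $F_2$ over $Q$. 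Hence the element of $\sum_{\calP([\alpha,\beta]_\k)}(F)$ produced by $\frakP$ lies in $\sum_{\calP([a_1,b_1])}(F_1)\ke + \sum_{\calP([a_2,b_2])}(F_2)\ked$.

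For the inclusion $\supseteq$, I would invoke the explicit construction carried out just before the theorem: given any two real partitions $P = \{p_0,\ldots,p_s\}$ of $[a_1,b_1]$ and $Q = \{q_0,\ldots,q_t\}$ of $[a_2,b_2]$, the step-by-step procedure (advancing one coordinate at a time, never repeating the pair) produces a strong partition $\frakP$ as in Eq. \ref{mkStrongPart} whose projections satisfy $\frakP_{\ke} = P$ and $\frakP_{\ked} = Q$. Applying Eq. \ref{eq3} to this $\frakP$, the $\ke$-sum equals $\sum_{i=0}^{s-1}|F_1(p_{i+1}) - F_1(p_i)|$ (each increment of the first coordinate contributes exactly once, while steps holding the first coordinate fixed contribute zero), and the $\ked$-sum equals the analogous quantity for $Q$. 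Thus every element of $\sum_{\calP([a_1,b_1])}(F_1)\ke + \sum_{\calP([a_2,b_2])}(F_2)\ked$ is realized by some strong partition.

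The step I expect to require the most care is the second inclusion, specifically verifying that the zero-contribution terms of Eq. \ref{eq3} exactly absorb the ``extra'' points the strong-partition construction introduces when the two real partitions have different cardinalities $s \neq t$. The key observation that makes this work is that the factorization $F_j(\rho) = F_j(\text{its $j$-th real coordinate})$ kills any increment along a degenerate edge, so the hyperbolic variation sum collapses onto the two independent real variation sums without cross-contamination. Once this is noted, both inclusions reduce to bookkeeping with the idempotent decomposition and no further analytical estimate is needed.
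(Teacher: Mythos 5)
Your proposal is correct and follows essentially the same route as the paper's own proof: project a strong partition to obtain real partitions of $[a_1,b_1]$ and $[a_2,b_2]$ for one inclusion, and use the construction of Eq.~\ref{mkStrongPart} together with the vanishing of increments over degenerated sub-intervals for the other. Your treatment of the forward inclusion (explicitly deleting repeated consecutive coordinates) is slightly more careful than the paper's appeal to Remark~\ref{rk1}, but the substance is identical.
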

\begin{proof}
Suppose we are given a point of the set  $\sum_{\calP([\alpha, \beta]_{\k})} (F).$
By Remark \ref{rk1}, the projections $\frakP_{\ke}$ and $\frakP_{\ked}$ are partitions over $[a_1, b_1]$ and $[a_2, b_2]$ respectively and it takes the form in the Eq. \ref{eq3}.

Reciprocally, two partitions $P \in \calP([a_{1}, b_{1}])$ and $Q \in \calP([a_{2}, b_{2}])$ define a strong partition $\frakP$ (see Eq. \ref{mkStrongPart}). Partition $\frakP$ fulfill with $\frakP_{\ke} = P$ and $\frakP_{\ked} = Q$, even if the process generates $n_{\frakP} =n_{P} + n_{Q}$ points, where $n_{P}$ and $n_{Q}$ denote the cardinality of $P$ and $Q$, no additional elements in the sum are added, since in degenerated intervals $|F_1(p_{j + 1}) - F_1(p_{j})| = 0$ or $|F_2(p_{j + 1}) - F_2(p_{j})| = 0$, implying that
\[ \begin{split}
\left( \sum_{j=0}^{n_\frakP-1} \Delta_{\frakP, j}F_1 \right) & \ke + \left( \sum_{j=0}^{n_\frakP-1} \Delta_{\frakP, j}F_2 \right)\ked =  \\
& \left(\sum_{j=0}^{n_P-1} |F_1(p_{j + 1}) - F_1(p_{j})| \right)\ke + \left( \sum_{j=0}^{n_Q-1} |F_2(q_{j + 1}) - F_2(q_{j})| \right)\ked. 
\end{split} \]
\end{proof}
It is combination of Def. \ref{d4} with Thm. \ref{pt1}, that makes definition of hyperbolic-valued functions of bounded variation allowable.

\begin{corollary}\label{crBVC}
Let $F:[\alpha, \beta]_{\k} \rightarrow \K$ be a function given by Eq. \ref{eq3}. The function $F$ has hyperbolic bounded variation if and only if the idempotent component functions $F_1:[a_1, b_1] \rightarrow \R$ and $F_2:[a_2, b_2] \rightarrow \R$ are functions of real bounded variation.
\end{corollary}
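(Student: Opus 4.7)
The plan is to derive the corollary almost immediately from Theorem \ref{pt1}, combined with the projection-wise reformulation of Definition \ref{d4} that the paper notes just after that definition. Since $F$ satisfies the structural assumption $F(\xi) = F_1(x_1)\ke + F_2(x_2)\ked$ required in Eq. \ref{eq3}, Theorem \ref{pt1} applies and gives
\[ \sum_{\calP([\alpha, \beta]_{\k})} (F) = \sum_{\calP([a_{1}, b_{1}])}(F_{1})\ke + \sum_{\calP([a_{2}, b_{2}])}(F_{2})\ked. \]
Taking the idempotent projections of both sides, I read off
\[ \Bigl( \sum_{\calP([\alpha, \beta]_{\k})}(F)\Bigr)_{\ke} = \sum_{\calP([a_{1}, b_{1}])}(F_{1}), \qquad \Bigl( \sum_{\calP([\alpha, \beta]_{\k})}(F)\Bigr)_{\ked} = \sum_{\calP([a_{2}, b_{2}])}(F_{2}). \]

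Next I would invoke the remark immediately following Definition \ref{d4}: $F$ has hyperbolic bounded variation if and only if both of these projected real sets are bounded above. On the other hand, $\sum_{\calP([a_{j}, b_{j}])}(F_{j})$ is by construction the collection of ordinary variation sums $\sum_i |F_j(p_{i+1}) - F_j(p_i)|$ taken over all partitions of $[a_j, b_j]$, so its boundedness is the very definition of $F_j$ being of real bounded variation. Stringing the two equivalences together on each idempotent axis yields the biconditional claimed by the corollary.

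I do not expect any real obstacle: the substantive content has been absorbed into Theorem \ref{pt1}, and what remains is a translation of definitions across the idempotent decomposition. The only micro-step worth being explicit about is that $\spk(A) \prec \infty$ in the sense of Definition \ref{d4} is equivalent to both $A_{\ke}$ and $A_{\ked}$ being bounded subsets of $\R$, which is exactly the point already made in the paragraph after Definition \ref{d4} and which justifies splitting the hyperbolic finiteness condition into two independent real conditions.
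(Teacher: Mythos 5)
Your proposal is correct and is exactly the argument the paper intends: the corollary is stated without a separate proof precisely because it is the immediate combination of Theorem \ref{pt1} with the remark after Definition \ref{d4} that hyperbolic boundedness of $\spk$ amounts to boundedness of the two idempotent projections, each of which is the classical variation set of $F_j$. Nothing is missing.
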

On account of this result, the set of all points of discontinuity of a hyperbolic function of bounded variation is well defined, which is due to the fact that a real function of bounded variation only has jump discontinuities and therefore the set of all points of discontinuity is numerable, see \cite[Sec. 6.8]{a1974}.
\begin{lemma}
Let $F:[\alpha, \beta]_{\k} \rightarrow \K$ is  a hyperbolic-valued functions of bounded variation which every component only depends of the respective variable component. Then, the set of all points of discontinuity of $F$ consists of a numerable union of perpendicular line segments to idempotent axes.
\end{lemma}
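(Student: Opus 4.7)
The plan is to reduce the question to the classical real-variable result (Apostol, Sec. 6.8) that any real function of bounded variation has an at most countable set of discontinuities, and then translate this back through the idempotent decomposition.

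First I would unpack $\K$-continuity in coordinates. Writing $\xi = x_1\ke + x_2\ked$, $\zeta = y_1\ke + y_2\ked$, $\epsilon = \epsilon_1\ke + \epsilon_2\ked$, $\delta = \delta_1\ke + \delta_2\ked$ with positive real components, the hyperbolic metric splits as $D_{\k}(\xi,\zeta) = |x_1 - y_1|\ke + |x_2 - y_2|\ked$, and by hypothesis $D_{\k}(F(\xi), F(\zeta)) = |F_1(x_1) - F_1(y_1)|\ke + |F_2(x_2) - F_2(y_2)|\ked$. Comparing componentwise under the strict order $\prec$, one reads off that $F$ is $\K$-continuous at $\xi$ if and only if $F_1$ is continuous at $x_1$ and $F_2$ is continuous at $x_2$. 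Hence the discontinuity set of $F$ is exactly
\[ \mathfrak{D}(F) = \bigl(D_1\ke + [a_2,b_2]\ked\bigr) \cup \bigl([a_1,b_1]\ke + D_2\ked\bigr), \]
where $D_j$ denotes the discontinuity set of $F_j$ on $[a_j,b_j]$.

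Next I would invoke Corollary \ref{crBVC}: since $F$ has hyperbolic bounded variation and $F(\xi) = F_1(x_1)\ke + F_2(x_2)\ked$, the real components $F_1$ and $F_2$ are functions of real bounded variation. By the classical theorem cited as \cite{a1974}, both $D_1 \subset [a_1,b_1]$ and $D_2 \subset [a_2,b_2]$ are at most countable. Therefore
\[ \mathfrak{D}(F) = \bigcup_{p \in D_1}\bigl(\{p\}\ke + [a_2,b_2]\ked\bigr) \;\cup\; \bigcup_{q \in D_2}\bigl([a_1,b_1]\ke + \{q\}\ked\bigr), \]
a countable union of line segments. Each segment of the first family is parallel to the $\ked$-axis (hence perpendicular to the $\ke$-axis), and each of the second family is parallel to the $\ke$-axis (hence perpendicular to the $\ked$-axis), giving the claimed description.

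The only delicate step is the biconditional characterization of $\K$-continuity in terms of the two real components; this is immediate here because of the split structure $F = F_1(x_1)\ke + F_2(x_2)\ked$, but it is worth recording explicitly since without it the inclusion $\mathfrak{D}(F) \subset (D_1\ke + [a_2,b_2]\ked) \cup ([a_1,b_1]\ke + D_2\ked)$ does not follow. Everything else is bookkeeping with the idempotent decomposition plus citation of Corollary \ref{crBVC} and the classical real result.
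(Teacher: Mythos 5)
Your proposal is correct and follows essentially the same route as the paper's proof: reduce via Corollary \ref{crBVC} to the real components $F_1$, $F_2$ being of bounded variation, invoke the classical countability of their discontinuity sets, and identify the discontinuity set of $F$ with the union of the corresponding vertical and horizontal segments. Your explicit componentwise unpacking of $\K$-continuity makes the two-sided inclusion slightly more transparent than the paper's version, but the argument is the same.
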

\begin{proof}
The components $F_1$ and $F_2$ from $F$ are real functions of bounded variation so, there exist two set $\{x_{1, n}\}_{n \in \N} \subset [a_1, b_1]$ and $\{x_{2, n}\}_{n \in \N} \subset [a_2, b_2]$ do consist of all discontinuities for $F_1$ and $F_2$ respectively.

For every point $y \in [a_2, b_2]$ and $n \in \N$, the point $x_{1, n}\ke + y\ked$ is a point of discontinuity of $F$. Thus, the set of discontinuities points contains the union 
\[\bigcup_{n \in \N} x_{1, n}\ke + [a_2, b_2]\ked.\]

Similarly, the set of discontinuities points of $F$ contains the union
\[\bigcup_{n \in \N} [a_1, b_1]\ke + x_{2, n}\ked.\]

It follows that the union 
\[\frakD(F) := \left( \bigcup_{n \in \N} x_{1, n}\ke + [a_2, b_2]\ked \right) \cup \left( \bigcup_{n \in \N} [a_1, b_1]\ke + x_{2, n}\ked \right)\] 
contains all point of discontinuity of $F$, which is clear from the fact that if there exists $\xi = x\ke + y\ked$ a discontinuity point of $F$, then $x$ will be a discontinuity point of $F_1$ or $y$ a discontinuity point of $F_2$, which imply that $x \in \{x_{1, n}\}_{n \in \N}$ or $y \in \{x_{2, n}\}_{n \in \N}$.
\end{proof}

\begin{theorem}
The set of all discontinuity points of a hyperbolic-valued function of bounded variation whose components depend only of the respective component of the variable, has a zero Lebesgue measure in the Euclidean plane.
\end{theorem}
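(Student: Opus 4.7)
The plan is to reduce the statement to the preceding lemma combined with countable subadditivity of the two-dimensional Lebesgue measure. First I would invoke the lemma to write the set of all discontinuity points of $F$ as a subset of
\[\frakD(F) = \left( \bigcup_{n \in \N} x_{1, n}\ke + [a_2, b_2]\ked \right) \cup \left( \bigcup_{n \in \N} [a_1, b_1]\ke + x_{2, n}\ked \right),\]
where $\{x_{1,n}\}_{n\in\N}$ and $\{x_{2,n}\}_{n\in\N}$ are the at most countable sets of jump discontinuities of the real bounded-variation components $F_{1}$ and $F_{2}$; countability here is the classical fact for real-valued BV functions invoked right after Corollary \ref{crBVC}.

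Next I would observe that, under the standard identification of $\K$ with $\R^{2}$ sending $t+s\k\mapsto(t,s)$, the idempotents $\ke$ and $\ked$ are linearly independent vectors, so each set of the form $x_{1,n}\ke+[a_{2},b_{2}]\ked$ or $[a_{1},b_{1}]\ke+x_{2,n}\ked$ is a bounded straight line segment in the Euclidean plane, parallel respectively to the $\ked$-axis or the $\ke$-axis. Any bounded line segment in $\R^{2}$ has vanishing two-dimensional Lebesgue measure, so each of the countably many pieces is $\mu_{\R^{2}}$-null.

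Finally, I would apply countable subadditivity of $\mu_{\R^{2}}$ to conclude $\mu_{\R^{2}}(\frakD(F))=0$, and monotonicity would then give the same conclusion for the (smaller) set of actual discontinuity points of $F$. The only obstacle here is essentially bookkeeping: fixing the identification of $\K$ with the Euclidean plane and verifying that the ``parallel segments to the idempotent axes'' produced by the lemma really are one-dimensional subsets of $\R^{2}$. No additional analytic difficulty appears, since the main input, namely countability of the discontinuity sets of $F_{1}$ and $F_{2}$, has already been supplied by the classical theory of real-valued functions of bounded variation.
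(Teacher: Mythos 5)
Your argument is correct and is essentially the paper's own proof, spelled out in more detail: the paper likewise reduces the claim to the preceding lemma and then notes that each line (segment) has zero Lebesgue measure in the Euclidean plane and that a numerable union of such sets is again null. No discrepancy to report.
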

\begin{proof}
The idea of the proof lies in the fact that every line has zero measure in the Euclidean plane and numerable union of them is again zero measure.
\end{proof}

Note that similar result fails with  hyperbolic Lebesgue measure, defined in \cite{ks2017}, replacing Lebesgue measure. Since, the Lebesgue measure is identified with $\mu = \mu_{\R}\ke + \mu_{\R}\ked$, where $\mu_{\R}$ is the Lebesgue measure on the real line, the set $x_{1, n}\ke + [a_2, b_2]\ked$ has not zero measure for all $n \in \N$, which is due to the fact that $[a_2, b_2]$ is not a zero measure real set.

\section{Hyperbolic-valued Riemann-Stieltjes Integral}\label{scHRSI}
The diameter of a real partition $P$ is defined as the maximum into the set of all lengths of successive intervals generated by $P$,
\[\diam(P) = \max \{ \lambda([p_{j+1}, p_j]) \ |\ j \in \{0, ..., n-1\} \}. \]
It is clear that there exists a natural extension of the notion of diameter of a partition to the case of strong partitions if we replace maximum by supremum, but this is not beneficial in the effort to formulate a Riemann-Stieltjes integral.
\begin{definition}\label{d7}
Let $\frakP$ be a strong partition of $[\alpha, \beta]_{\k}$. The diameter of $\frakP$ is defined to be the hyperbolic number
\[\diam_{\k}(\frakP) = \diam(\frakP_{\ke})\ke + \diam(\frakP_{\ked})\ked. \]
\end{definition}
The relation between the concepts of diameter and strong partition yields a notion of Riemann-Stieltjes Integral. 
\begin{definition}\label{d6}
Let $F: [\alpha, \beta]_{\k} \rightarrow \K$ and $G: \K \rightarrow \K$ be two hyperbolic functions. A hyperbolic number $\calI$ is called the Riemann-Stieltjes integral of $F$ respect to $G$, if for every $\epsilon \in \K^{+}$ there exists a $\delta \in \K^{+}$ such that 
\[\left| S_{\k}(\frakP, F, G) -  \calI \right|_{\k} = \left| \sum_{j = 0}^{n_{\frakP} - 1} F(\gamma_{j}) \left| G(\rho_{j + 1}) - G(\rho_{j}) \right|_{\k} - \calI \right|_{\k} \prec \epsilon,\]
for any strong partition $\frakP \in \calP([\alpha, \beta]_{\k})$ that fulfill the property $\diam_{\k}(\frakP) \prec \delta$ and  whatever selection $\gamma_j \in [\rho_{j+1}, \rho_j]_{\k}$, with $j \in \{0, ..., n_{\frakP} - 1\}.$
\end{definition}
The quantity $S_{\k}(\frakP, F, G)$ is called the Riemann-Stieltjes sum. In addition, when such $\calI \in \K$ exists, it will be denoted by $\displaystyle \calI := \int_{\alpha}^{\beta} F d_{\k}G$.

If $F$ and $G$ in Def. \ref{d6} are assumed to have components that only depend of the respective component from the variable, then the Riemann-Stieltjes sum is analogue to Eq. \ref{eq3} and hence
\[\begin{split}
S_{\k}(\frakP, F, G) &= \left(\sum_{j = 0}^{n_{\frakP} - 1} F_{1}(y_{j, 1})\left| G_{1}(p_{j + 1, 1}) - G_1(p_{j, 1})\right| \right)\ke  \\
  &+ \left(\sum_{j = 0}^{n_{\frakP} - 1} F_{2}(y_{j, 2})\left| G_{2}(p_{j + 1, 2}) - G_2(p_{j, 2})\right| \right)\ked. 
\end{split}\]

Since $F_1$ and $F_2$ are real valued functions defined on the respective projections of $[\alpha, \beta]_{\k}$, likewise $G_1$ and $G_2$ are real valued functions defined on the whole real line, hence the Riemann-Stieltjes sum over the hyperbolic plane is the idempotent sum of two classic Riemann-Stieltjes sums on the partition generated by projections of $\frakP$: 
\[S_{\k}(\frakP, F, G) = S(\frakP_{\ke}, F_1, G_1)\ke + S(\frakP_{\ked}, F_2, G_2)\ked.\]
This construction generalized that of \cite[Sec. 7.3]{a1974}.

From now on all functions will be considered with components depending only of the respective component from the variable.

Component-wise operation shows that hyperbolic Riemann-Stieltjes integral can be computed using classical Riemann-Stieltjes integration over the real line.
\begin{equation}\label{eq-rsk}
\int_{\alpha}^{\beta} F d_{\k}G = \left(\int_{a_1}^{b_1} F_1 dG_1\right)\ke + \left( \int_{a_2}^{b_2} F_2 dG_2 \right)\ked,
\end{equation}

The preceding observations, leads to the following result:
\begin{theorem}\label{tmERS}
A hyperbolic function $F:[\alpha, \beta]_{\k} \rightarrow \K$ is hyperbolic Riemann-Stieltjes integrable with respect to a hyperbolic function $G: \K \rightarrow \K$, if and only if the components $F_1:[a_1, b_1] \rightarrow \R$ and $F_{2}:[a_2, b_2] \rightarrow \R$ are real Riemann-Stieltjes integrable functions respect to $G_{1}:\R \rightarrow \R$ and $G_{2}: \R \rightarrow \R$.
\end{theorem}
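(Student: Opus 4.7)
The proof rests squarely on the decomposition
\[
S_{\k}(\frakP, F, G) = S(\frakP_{\ke}, F_1, G_1)\ke + S(\frakP_{\ked}, F_2, G_2)\ked
\]
displayed just before the theorem, together with the componentwise structure of $|\cdot|_{\k}$, $\diam_{\k}(\cdot)$, and the strict partial order $\prec$. Under this decomposition, the hyperbolic estimate $|S_{\k}(\frakP, F, G) - \calI|_{\k} \prec \epsilon$ with $\calI = \calI_1\ke + \calI_2\ked$ and $\epsilon = \epsilon_1\ke + \epsilon_2\ked$ is simply a pair of simultaneous real inequalities $|S(\frakP_{\ke}, F_1, G_1) - \calI_1| < \epsilon_1$ and $|S(\frakP_{\ked}, F_2, G_2) - \calI_2| < \epsilon_2$, and likewise $\diam_{\k}(\frakP) \prec \delta$ is the pair $\diam(\frakP_{\ke}) < \delta_1$, $\diam(\frakP_{\ked}) < \delta_2$. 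The theorem is essentially the assertion that this componentwise reading is faithful.

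For the backward direction (which is the easier one), assume $F_1, F_2$ are real Riemann--Stieltjes integrable with integrals $\calI_1, \calI_2$. Set $\calI := \calI_1\ke + \calI_2\ked$. Given $\epsilon = \epsilon_1\ke + \epsilon_2\ked \in \K^{+}$, extract $\delta_1, \delta_2 > 0$ from the classical definitions, set $\delta := \delta_1\ke + \delta_2\ked \in \K^{+}$, and note that any strong partition $\frakP$ with $\diam_{\k}(\frakP) \prec \delta$ has $\diam(\frakP_{\ke}) < \delta_1$, $\diam(\frakP_{\ked}) < \delta_2$. Any admissible tag selection $\gamma_j \in [\rho_j, \rho_{j+1}]_{\k}$ projects to admissible real tags in the two real subintervals, so both real inequalities hold and the hyperbolic estimate follows by the decomposition.

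For the forward direction, suppose $F$ is hyperbolic Riemann--Stieltjes integrable with integral $\calI = \calI_1\ke + \calI_2\ked$. Fix a real $\epsilon > 0$ and apply the hypothesis with $\epsilon_{\k} := \epsilon\ke + \epsilon\ked$ to obtain $\delta_{\k} = \delta_1\ke + \delta_2\ked \in \K^{+}$. Given an arbitrary real partition $P$ of $[a_1, b_1]$ with $\diam(P) < \delta_1$ and arbitrary real tags therein, choose any partition $Q$ of $[a_2, b_2]$ with $\diam(Q) < \delta_2$ and any real tags in $Q$, and then apply the construction following Eq. \ref{mkStrongPart} to build a strong partition $\frakP$ with $\frakP_{\ke} = P$ and $\frakP_{\ked} = Q$. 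Lift the chosen real tags to hyperbolic tags $\gamma_j \in [\rho_j, \rho_{j+1}]_{\k}$ componentwise; on degenerate subintervals the forced tag contributes zero to the relevant coordinate of the sum (as observed in the proof of Theorem \ref{pt1}), so the $\ke$-coordinate of $S_{\k}(\frakP, F, G)$ is precisely the real Riemann--Stieltjes sum $S(P, F_1, G_1)$ with the prescribed tags. Reading off the $\ke$-coordinate of $|S_{\k}(\frakP, F, G) - \calI|_{\k} \prec \epsilon_{\k}$ gives $|S(P, F_1, G_1) - \calI_1| < \epsilon$, which is real integrability of $F_1$ with integral $\calI_1$; the argument for $F_2$ is symmetric.

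The only genuinely nontrivial step is the lifting in the forward direction: verifying that any real partition with small diameter, together with any tag choice, can be realized as a projection of some strong partition of comparably small hyperbolic diameter, and that degenerate subintervals (forced on us by the construction in Eq. \ref{mkStrongPart} whenever $n_{\frakP}$ exceeds $\max(n_P,n_Q)$) do not contaminate the projected Riemann--Stieltjes sum. This is exactly what the discussion preceding the theorem already secures via Eq. \ref{eq-rsk}, so there is no real obstacle beyond unwinding definitions.
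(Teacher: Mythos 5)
Your proof is correct and follows essentially the same route as the paper, which states Theorem \ref{tmERS} without a formal proof and relies precisely on the componentwise decomposition $S_{\k}(\frakP, F, G) = S(\frakP_{\ke}, F_1, G_1)\ke + S(\frakP_{\ked}, F_2, G_2)\ked$ that you invoke. Your explicit handling of the forward direction --- lifting an arbitrary tagged real partition to a strong partition via the construction of Eq. \ref{mkStrongPart} and checking that degenerate subintervals contribute nothing to the projected sum --- merely supplies the detail the paper leaves implicit in the phrase ``the preceding observations lead to the following result.''
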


With the identity function $Id_{\k}: \K \rightarrow \K$, the Riemann integral defined in \cite[Ch. IV]{vd1935} of a function $F$ is recovered when it is computed the hyperbolic Riemann-Stieltjes integral respect to $Id_{\k}$. Also, the last is equivalent to the Lebesgue integral introduced in  \cite[Sec. 3]{ks2017} with the hyperbolic-valued Lebesgue measure in $\K$.

Results in \cite{vd1935} requires non-self-intersecting continuous loop (Jordan curve). The straight line that joins the two extreme points of a hyperbolic interval gives a loop of this kind. Therefore, for every strong partition $\frakP$ the union of lines that join every sub-interval $[\rho_{j+1}, \rho_{j}]_{\k}$, where $j \in \{0, ..., n-1\}$, is a Jordan loop and
\[\int_{\alpha}^{\beta} F d_{\k}Id_{\k}  = \int_{\alpha}^{\beta} F d_{\k}\xi = \left(\int_{a_1}^{b_1} F_1 dx_1 \right) \ke + \left( \int_{a_2}^{b_2} F_2 dx_2 \right) \ked. \]

For Lebesgue measure $\mu_{\k} := \mu_{\R}\ke + \mu_{\R}\ked$, it is necessary to take into account that the Lebesgue integral is defined in a component-wise way and that the real Lebesgue integral restricted to a closed interval reduces to the Riemann integral,
\[\int_{\alpha}^{\beta} F d_{\k}\xi = \int_{[\alpha, \beta]_{\k}} F d\mu_{\k} = \left(\int_{[a_1, b_1]}F_{1}d\mu_{\R} \right)\ke + \left( \int_{[a_2, b_2]} F_2d\mu_{\R} \right)\ked\]

A primary study of a Riemann integration of hyperbolic-valued functions was presented in \cite{el2022}. In this paper, are established some basic properties and results on the introduced notion of integration. Def. 5.2 naturally generalizes and strengthens that Riemann integral. 

Let us mention an important property of the hyperbolic Riemann-Stieltjes integral, when the integrator is an holomorphic function.
\begin{theorem}\label{thRSR}
Let $G: \K \rightarrow \K$ be an holomorphic and continuously differentiable function, $F:[\alpha, \beta] \rightarrow \K$ a hyperbolic Riemann-Stieltjes integrable function with respect to $G$. Then
\[\int_{\alpha}^{\beta} F d_{\k}G =  \int_{\alpha}^{\beta} FG' d_{\k} \xi. \]
\end{theorem}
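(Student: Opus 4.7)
My plan is to reduce the hyperbolic identity to two classical real Riemann--Stieltjes identities, using the component-wise decomposition already established in Eq. \ref{eq-rsk}. The key ingredients are (i) the standing assumption from earlier in Section \ref{scHRSI} that all functions have components depending only on the respective component of the variable, (ii) the holomorphy of $G$, which by the material in Section \ref{hypValuedFunctions} guarantees $G(\xi)=G_1(x_1)\ke+G_2(x_2)\ked$ with $G'(\xi)=dG_1(x_1)\ke+dG_2(x_2)\ked$, and (iii) the classical real analysis fact that if $G_j\in C^1([a_j,b_j])$, then for any real Riemann--Stieltjes integrable $F_j$ with respect to $G_j$,
\[\int_{a_j}^{b_j} F_j\,dG_j \;=\; \int_{a_j}^{b_j} F_j\,G_j'\,dx_j.\]

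First I would unpack the left-hand side using Eq. \ref{eq-rsk} to obtain
\[\int_{\alpha}^{\beta} F\,d_{\k}G \;=\; \left(\int_{a_1}^{b_1}F_1\,dG_1\right)\ke \;+\; \left(\int_{a_2}^{b_2}F_2\,dG_2\right)\ked.\]
Next I would compute the product $FG'$ in idempotent components. Because both $F$ and $G'$ have the structure $H(\xi)=H_1(x_1)\ke+H_2(x_2)\ked$ and the idempotents satisfy $\ke^2=\ke$, $\ked^2=\ked$, $\ke\ked=0$, the product reduces cleanly to
\[F(\xi)\,G'(\xi) \;=\; F_1(x_1)\,dG_1(x_1)\,\ke \;+\; F_2(x_2)\,dG_2(x_2)\,\ked,\]
so $FG'$ again has components depending only on the respective real variable. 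This lets me invoke Eq. \ref{eq-rsk} for the right-hand side (specialized to the integrator $Id_{\k}$ as noted just before the theorem):
\[\int_{\alpha}^{\beta} FG'\,d_{\k}\xi \;=\; \left(\int_{a_1}^{b_1}F_1\,G_1'\,dx_1\right)\ke \;+\; \left(\int_{a_2}^{b_2}F_2\,G_2'\,dx_2\right)\ked.\]

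Finally I would apply the classical real-variable theorem on each idempotent axis to equate the corresponding components, using that Theorem \ref{tmERS} ensures the Riemann--Stieltjes integrability of $F_j$ with respect to $G_j\in C^1$, a hypothesis that meets the classical result exactly. Adding the two component identities finishes the proof. The only step requiring care is the verification that multiplication of $F$ and $G'$ respects the "components depend only on respective variables" structure; the main obstacle is therefore essentially bookkeeping with the idempotent basis rather than any deep analytic difficulty, and once this is dispatched the reduction to the classical one-dimensional identity is immediate.
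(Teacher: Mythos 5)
Your proposal is correct and follows essentially the same route as the paper: decompose both sides into idempotent components via Eq. \ref{eq-rsk}, use holomorphy to write $G'=G_1'\ke+G_2'\ked$, and invoke the classical real Riemann--Stieltjes result (Apostol, Thm.\ 7.8) on each axis before recombining. Your explicit check that $FG'$ retains the ``components depend only on the respective variable'' structure is a small piece of bookkeeping the paper leaves implicit, but it does not change the argument.
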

\begin{proof}
Since $G$ is holomorphic hyperbolic function, the derivative $G'(\xi) = G_1'(x)\ke + G_2'(y)\ked$ exists, for $\xi \in \K$. By the continuously differentiability of $G$ its idempotent components have continuous derivatives of any order. Therefore $G_1$ and $G_2$ are functions of bounded variation and \cite[Thm. 7.8]{a1974} makes easy to see that
\[ \int_{a_1}^{b_1} F_1 dG_1 = \int_{a_1}^{b_1} F_1G_1' dx_1\quad \text{and} \quad \int_{a_2}^{b_2} F_2 dG_2 = \int_{a_2}^{b_2} F_2G_2' dx_2.\]
Combining these equalities the result is obtained.
\end{proof}
\begin{remark}
The assertion of Thm. \ref{thRSR} does not follow if requirement on $G$ to be a continuously differentiable function is omitted from the hypotheses. Unlike happen in the complex analysis context, hyperbolic holomorphic functions does not have derivatives of all orders, see \cite{vd1935, mr1998, ks2005}.
\end{remark}

Thm. \ref{thRSR} establishes a direct relation between Riemann-Stieltjes and Riemann integrals, when integrability of $F$ respect to $G$ is required. So, it is convenient to see under what conditions this integrability holds. 
\begin{theorem}
Suppose that $F: [\alpha, \beta]_{\k} \rightarrow \K$ is a continuous hyperbolic function and $G: \K \rightarrow \K$ is a hyperbolic function of bounded variation. Then $F$ is Riemann-Stieltjes integrable with respect to $G$.
\end{theorem}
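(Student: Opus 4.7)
My plan is to reduce the statement to the classical real-variable Riemann--Stieltjes theorem via the idempotent decomposition, using the standing assumption (stated just before the theorem) that all functions have components depending only on the respective component of the variable. So write $F(\xi) = F_1(x_1)\ke + F_2(x_2)\ked$ and $G(\xi) = G_1(x_1)\ke + G_2(x_2)\ked$. By Corollary \ref{crBVC}, the hypothesis that $G$ has hyperbolic bounded variation is equivalent to $G_1 \in BV([a_1,b_1])$ and $G_2 \in BV([a_2,b_2])$ in the ordinary real sense.

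Next I would verify that $\K$-continuity of $F$ forces each real component $F_j$ to be continuous on its interval. Given real $\epsilon>0$, choose the hyperbolic positive number $\epsilon\ke+\epsilon\ked \in \K^{+}$ and apply the definition of $\K$-continuity at a fixed $\xi_0 = x_1^0\ke + x_2^0\ked$. This yields some $\delta = \delta_1\ke+\delta_2\ked \in \K^{+}$ for which $D_\k(\xi_0,\zeta)\prec\delta$ implies $D_\k(F(\xi_0),F(\zeta))\prec \epsilon\ke+\epsilon\ked$. Written componentwise this says $|x_1^0-y_1|<\delta_1$ and $|x_2^0-y_2|<\delta_2$ force $|F_1(x_1^0)-F_1(y_1)|<\epsilon$ and $|F_2(x_2^0)-F_2(y_2)|<\epsilon$. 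Since $F_1$ depends only on its first coordinate and $F_2$ only on its second, fixing the irrelevant coordinate gives continuity of $F_1$ on $[a_1,b_1]$ and $F_2$ on $[a_2,b_2]$.

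At this point the classical real-variable result (Apostol, Theorem 7.27, the same reference invoked in Theorem \ref{thRSR}) applies: a continuous real function on a compact interval is Riemann--Stieltjes integrable against any real function of bounded variation on that interval. Hence $F_1$ is $R$--$S$ integrable with respect to $G_1$ on $[a_1,b_1]$ and $F_2$ with respect to $G_2$ on $[a_2,b_2]$. Finally, Theorem \ref{tmERS} lifts these two real integrabilities to hyperbolic Riemann--Stieltjes integrability of $F$ with respect to $G$, giving the claim together with the explicit value via Eq. \ref{eq-rsk}.

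The only nontrivial step is the second one: translating the $\K$-$\epsilon$-$\delta$ definition of continuity into ordinary componentwise real continuity. Once that bookkeeping is handled, the theorem is essentially the idempotent shadow of a standard real-analysis fact, and no genuinely new obstacle appears.
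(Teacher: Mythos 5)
Your proposal is correct and follows essentially the same route as the paper: reduce to the idempotent components, invoke Corollary \ref{crBVC} for $G$, apply Apostol's Theorem 7.27 to each real pair $(F_j, G_j)$, and lift back via Theorem \ref{tmERS}. The only difference is that you spell out the (true, and worth making explicit) fact that $\K$-continuity of $F$ yields real continuity of each component, which the paper simply asserts.
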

\begin{proof}
The components $F_1$ and $F_2$ of a continuous hyperbolic function $F$, are real continuous functions and the Cor. \ref{crBVC} shows that $G_1$ and $G_2$ are real functions of bounded variation. Therefore, the integrals 
\[ \int_{a_1}^{b_1}F_1 dG_1 \quad and \quad \int_{a_2}^{b_2}F_2 dG_2\]
exist, which is clear from \cite[Thm. 7.27]{a1974}. Finally, by Thm. \ref{tmERS}, the result is obtained.
\end{proof}

\subsection*{Statements and Declarations}
This is part of the first author's Ph.D. thesis, written under the supervision of the second author at Instituto Polit\'ecnico Nacional.
\subsubsection*{Author Contributions} The authors contributed equally in this paper and typed, read, and approved the final
form of the manuscript.
\subsubsection*{Funding} Partial financial support was received from Instituto Politécnico Nacional (grant number
SIP20211188) and Postgraduate Study Fellowship of the Consejo Nacional deCiencia y Tecnología (CONACYT)
(grant number 744134).
\subsubsection*{Data and Material Availability} No data were used to support this study.
\subsubsection*{Code Availability} Not applicable.
\subsubsection*{Conflict of interest} The authors declare that they have no conflict of interest regarding the work reported in this paper.

\end{document}